\newcommand{\Z}{\mathbb{Z}}
\newcommand{\bp}{\begin{pmatrix}}
\newcommand{\ep}{\end{pmatrix}}
\newtheorem{theorem}{Theorem}[section]
\newtheorem{lemma}{Lemma}
\theoremstyle{definition}
\newtheorem{rmk}{Remark}
\theoremstyle{remark}
\title{Finite Gelfand Pairs and Cracking Points of the Symmetric Groups}
\author{Faith Pearson}
\author{Anna Romanov}
\author{Dylan Soller}
\date{August 2019}
\numberwithin{equation}{section}
\begin{document}

\begin{abstract}
    Let $\Gamma$ be a finite group. Consider the wreath product $G_n := \Gamma^n \rtimes S_n$ and the subgroup $K_n := \Delta_n \times S_n\subseteq G_n$, where $S_n$ is the symmetric group and $\Delta_n$ is the diagonal subgroup of $\Gamma^n$. For certain values of $n$ (which depend on the group $\Gamma$), the pair $(G_n, K_n)$ is a Gelfand pair. It is not known for all finite groups which values of $n$ result in Gelfand pairs. Building off the work of Benson--Ratcliff \cite{BR}, we obtain a result which simplifies the computation of multiplicities of irreducible representations in certain tensor product representations, then apply this result to show that for $\Gamma = S_k, \ k \geq 5$, $(G_n,K_n)$ is a Gelfand pair exactly when $n = 1,2$.
\end{abstract}

\maketitle

\section{Introduction}

 Let $G$ be a finite group and $K$ a subgroup of $G$. Denote by $L(G)$ the set of complex-valued functions on $G$. This is an algebra under the convolution product

$$
f \star g (x) = \frac{1}{|G|} \sum_{y \in G} f(xy^{-1})g(y).
$$

\noindent The pair $(G,K)$ is said to be a {\em Gelfand pair} if the subalgebra $L(K \backslash G / K)$ of $K$-biinvariant functions in $L(G)$ is commutative.

 Gelfand pairs are well-studied in the context of Lie groups, where there is an analogous definition in terms of the algebra of integrable $K$-biinvariant functions on the group $G$. (See, for example, \cite{BR2}.) In the Lie group setting, the Gelfand pair structure can  be used to construct irreducible unitary representations of $G$ from representations of the subgroup $K$. Historically, these techniques played a pivotal role in describing the representation theory of semi-simple Lie groups \cite{H}. In the finite group setting, the theory of Gelfand pairs is less-developed, and has found surprising applications outside of group theory including statistics, experimental design, and combinatorics. For example, in \cite{D}, Diaconis uses finite Gelfand pairs to determine the rate at which certain Markov chains converge to stationary distributions, and the authors of \cite{B} apply finite Gelfand pairs to the study of association schemes. In \cite{AC}, finite Gelfand pairs are used to study parking functions, a useful tool in algebraic combinatorics. 

 This paper concerns a construction introduced by Aker--Can in \cite{AC} which produces families of finite Gelfand pairs associated to a fixed finite group. The construction proceeds as follows. Given a finite group $\Gamma$, the symmetric group $S_n$ acts on $\Gamma^n$ by permuting the factors, and we form the wreath product $G_n := \Gamma^n \rtimes S_n$ of $\Gamma$ with $S_n$. Let $\Delta_n$ be the diagonal subgroup of $\Gamma^n$. Then $K_n := \Delta_n \times S_n$ is a subgroup of $G_n$, and for certain values of $n$, the pair $(G_n,K_n)$ is a Gelfand pair.

In particular, when $\Gamma$ is abelian, $(G_n,K_n)$ is a Gelfand pair for all values of $n$ \cite{CST}. Such Gelfand pairs are relevant in the study of parking functions when $\Gamma$ is cyclic \cite{AC}. For non-abelian $\Gamma$, Benson--Ratcliff establish the following two results.

\begin{enumerate}
\item \cite[Theorem 1.2]{BR} The pair $(G_{|\Gamma|},K_{|\Gamma|})$ is not a Gelfand pair.
\item \cite[Theorem 1.1]{BR} There is some integer $N(\Gamma)$ with $3 \leq N(\Gamma) \leq |\Gamma|$ such that $(G_n,K_n)$ is a Gelfand pair for $n < N(\Gamma)$, and is not a Gelfand pair for $n \geq N(\Gamma)$. 
\end{enumerate}

We refer to $N(\Gamma)$ as the {\em cracking point} of $\Gamma$ and say that $\Gamma$ {\em cracks} at $N(\Gamma)$.

Aker--Can showed through GAP computations that there are groups for which this upper bound is reached and also groups for which this lower bound is reached \cite{AC}. For example, the symmetric group $S_3$ has a cracking point of $6$, whereas the group $GL(2,\mathbb{F}_3)$ has a cracking point of $3$. On the other hand, Benson--Ratcliff show that in certain infinite families of groups with no bound on order, the cracking point remains constant. For example, they show that for all odd primes $p$, the dihedral group $D_p$ has a cracking point of $6$ \cite{BR}. In general, the relationship between the finite group $\Gamma$ and its cracking point remains rather mysterious. The main result of this paper is to establish the cracking points of the symmetric groups.
\begin{theorem}
\label{thm}
Let $G_n:=(S_k)^n\rtimes S_n$ and $K_n:=\Delta_n \times S_n$, where $\Delta_n \subset (S_k)^n$ is the diagonal subgroup. For $k\geq 5$, the pair $(G_n, K_n)$ is a Gelfand pair for $n=1,2$ and is not a Gelfand pair for $n\geq 3$; that is, in the notation above, $N(S_k)=3$ for $k\geq 5$. Moreover, $N(S_4) = 4$ and $N(S_3) = 6$. 
\end{theorem}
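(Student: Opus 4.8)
The plan is to reduce the Gelfand-pair question to a multiplicity computation and then isolate the combinatorial heart of the problem. Recall (this is classical, and is presumably recalled in the body of the paper) that $(G_n, K_n)$ is a Gelfand pair if and only if the permutation representation $L(G_n/K_n)$ is multiplicity-free. Following Benson--Ratcliff, one decomposes $L(G_n/K_n)$ using the fact that irreducible representations of the wreath product $G_n = \Gamma^n \rtimes S_n$ are indexed by functions assigning to each irreducible $\rho \in \widehat{\Gamma}$ a partition, with $\sum |\lambda_\rho| = n$. The $K_n = \Delta_n \times S_n$-fixed-vector condition forces one to understand, for each irreducible $\rho$ of $\Gamma$, the multiplicity with which the trivial representation of the relevant diagonal/symmetric subgroup appears; by the quoted reduction (the ``result which simplifies the computation of multiplicities of irreducible representations in certain tensor product representations'' from the abstract, built on \cite{BR}), this collapses to asking whether certain tensor product multiplicities $\langle \rho^{\otimes n}, \text{(stuff)}\rangle$ — equivalently the dimensions of spaces of $\Gamma$-invariants in tensor powers, decorated by $S_n$-representation data — are all at most $1$.

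First I would treat the positive direction, $n = 1, 2$. For $n=1$ the pair is $(\Gamma, \Gamma)$-ish and trivially Gelfand; for $n = 2$ one checks directly that $L(G_2/K_2)$ is multiplicity-free — this amounts to showing that for every pair of irreducibles $\rho, \sigma$ of $S_k$ the tensor product $\rho \otimes \sigma$ contains the trivial representation at most once (which is automatic, since $\langle \rho \otimes \sigma, \mathbf{1}\rangle = \langle \rho, \sigma^*\rangle \le 1$) together with the analogous bookkeeping for the $S_2$-factor. This should be a short, uniform argument valid for all $k$ (indeed all $\Gamma$), and in fact reproves the general lower bound $N(\Gamma) \ge 3$.

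The substance is the negative direction for $n = 3$ and $k \ge 5$: I must exhibit an irreducible representation of $G_3$ occurring with multiplicity $\ge 2$ in $L(G_3/K_3)$. Via the reduction this becomes a statement about $S_k$: one needs an irreducible $\rho$ of $S_k$ for which the multiplicity of the trivial representation in $\rho \otimes \rho \otimes \rho$ (appropriately interpreted — i.e. $\dim(\rho^{\otimes 3})^{S_k}$, the Kronecker coefficient $g(\lambda,\lambda,\lambda)$ with trivial, or rather the number of times $\rho$ appears in $\rho \otimes \rho$) is at least $2$, or more precisely that some Kronecker-type coefficient controlling the $n=3$ layer is $\ge 2$. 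The natural candidate is the standard representation $\rho$ of dimension $k-1$ (partition $(k-1,1)$): then $\rho \otimes \rho$ decomposes with the trivial, the standard, $\wedge^2\rho$, and $\mathrm{Sym}^2_0\rho$ all appearing, and in particular $\langle \rho\otimes\rho\otimes\rho, \mathbf 1\rangle = \langle \rho \otimes \rho, \rho\rangle$, which one computes (via characters of $S_k$: $\chi_\rho(\sigma) = \mathrm{fix}(\sigma) - 1$) equals the number of such triples and is $\ge 2$ precisely once $k \ge 5$. I would carry this out by an explicit character inner-product calculation, $\frac{1}{k!}\sum_{\sigma \in S_k}(\mathrm{fix}(\sigma)-1)^3$, show it exceeds $1$ for $k \ge 5$, and then check it does \emph{not} for $k = 3, 4$ — which is why those two groups crack later and must be handled by separate (finite) computations, matching the known values $N(S_3) = 6$, $N(S_4) = 4$ from \cite{AC}. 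The main obstacle I anticipate is not this single inner product but verifying that the reduction machinery genuinely lets a single bad tensor-multiplicity at level $n=3$ propagate to a genuine failure of multiplicity-freeness of $L(G_3/K_3)$ — i.e. correctly matching the $S_n$-side combinatorics so that the multiplicity $\ge 2$ is not an artifact of miscounting — and, for the $k=4$ and $k=3$ cases, organizing the finite but nontrivial case-checks (up to $n = 3$ and $n = 5$ respectively) that pin down the exact cracking points.
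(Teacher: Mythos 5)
The central gap is in your choice of representation for the $n=3$, $k\ge 5$ step. You propose to take $\rho$ to be the standard representation of $S_k$ (partition $(k-1,1)$, $\chi_\rho(\sigma)=\mathrm{fix}(\sigma)-1$) and compute
\[
\langle \rho\otimes\rho,\rho\rangle \;=\; \frac{1}{k!}\sum_{\sigma\in S_k}\bigl(\mathrm{fix}(\sigma)-1\bigr)^3,
\]
claiming it is $\ge 2$ once $k\ge 5$. This is false: the $m$-th centered moment of $\mathrm{fix}(\sigma)$ about its mean $1$ equals the number of set partitions of an $m$-element set with no singleton blocks, provided $k\ge m$. For $m=3$ there is exactly one such partition, so the inner product equals $1$ for \emph{every} $k\ge 3$ (you can verify this directly for $k=5$: the sum over all $120$ permutations is $120$). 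Thus the standard representation never produces the needed multiplicity $\ge 2$, and your main computation dead-ends. The paper sidesteps this by choosing $\pi_m$ to be a \emph{highest-dimensional} irreducible of $S_k$ and taking $\pi=\pi_m\hat{\otimes}\pi_m\hat{\otimes}\psi$. It then proves a simple counting lemma (its Lemma \ref{SummerLemma}): if $M_\pi(e) > \sum_{\rho\in\widehat{S}_\pi}\dim\rho$, then some coefficient exceeds $1$, so $(G_n,K_n)$ is not a Gelfand pair. Combined with Lemma \ref{Me Prop} (which, as you anticipated, converts $M_\pi(e)$ into the tensor-product multiplicity), one reduces to showing $(\dim\pi_m)^2 > 4\dim\pi_m + 2\sum_{\pi_i\neq\pi_m}\dim\pi_i$, hence to $\dim\pi_m \ge 2p(k)+2$, where $p(k)$ is the number of partitions of $k$. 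This inequality is then verified using the Vershik--Kerov lower bound on the maximal dimension and the Hardy--Ramanujan upper bound on $p(k)$ for $k\ge 12$, and by direct computation for $5\le k\le 11$. The point is that $(\dim\pi_m)^2$ grows roughly like $k!$ divided by a subexponential factor, vastly outstripping the sum of all irreducible dimensions, so \emph{some} multiplicity in $\pi_m\otimes\pi_m$ is forced to blow up even though one cannot name which one — a pigeonhole argument that avoids explicit Kronecker coefficient computations entirely.

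A secondary issue: your argument for the $n=2$ case ("$\langle\rho\otimes\sigma,\mathbf 1\rangle\le 1$ is automatic") only covers the $S_\pi$ trivial piece. When $\pi=\rho\hat\otimes\rho$ you must also check the sign-character coefficient in $M_\pi$, which is a genuine (if easy) extra case; the paper avoids this entirely by citing the Benson--Ratcliff result that $N(\Gamma)\ge 3$ for every finite $\Gamma$. And for $k=3,4$ your plan of ``finite case-checks'' is the right shape, but be warned that for $S_4$ the paper has to push to $n=4$ with $\pi$ equal to four copies of the standard representation, and for $S_3$ the crack is at $n=6$; the bookkeeping is more than trivial.
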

We prove Theorem \ref{thm} using a general observation (Lemma \ref{Me Prop}) which simplifies the computation of cracking points.

Our paper is structured as follows. In Section 2, we discuss a decomposition of the $G_n$-representation $L(G_n/K_n)$, following the setup in \cite{BR}. This gives us the vocabulary necessary to establish our key observation, Lemma \ref{Me Prop}. In Section 3, we apply Lemma \ref{Me Prop} to prove our main result. 

\subsection*{Acknowledgments}

We would like to extend our gratitude toward Gail Ratcliff for introducing us to this problem and for sharing her expertise and enthusiasm about Gelfand pairs. We would also like to thank the University of Utah Department of Mathematics REU program for funding our project.


\section{Background}
Let $\Gamma$ be a finite group and $K_n \subset G_n$ as above. By general results about Gelfand pairs, the pair $(G_n,K_n)$ is a Gelfand pair if and only if the left quasi-regular representation $ind_{K_n}^{G_n}(triv_{K_n})$ of $G_n$ in $L(G_n/K_n)$ is multiplicity free \cite[Ch. 3F Thm. 9]{D}. Benson--Ratcliff give a decomposition of the space $L(G_n/K_n)$ into irreducible $G_n$-representations in \cite{BR}. In this section, we review some of the details of this decomposition in order to establish our key lemma.  

As $G_n = \Gamma^n \rtimes S_n$, it is perhaps unsurprising that the irreducible representations of $G_n$ can be constructed from those of $\Gamma$ and certain subgroups of $S_n$. The construction is as follows. Let $\{\pi_\ell\}_{\ell \in S}$ be the irreducible representations of $\Gamma$, where $S$ is an indexing set in bijection with the conjugacy classes of $\Gamma$. The irreducible representations of $\Gamma^n$ are all of the form $\pi := \pi_{\ell_1} \hat{\otimes} \cdots \hat{\otimes} \pi_{\ell_{n-1}} \hat{\otimes} \pi_{\ell_n}$, where $\hat{\otimes}$ denotes the exterior tensor product, and $\ell_i \in S$ (note that we allow for $\ell_i = \ell_k$ for $i \neq k$). The symmetric group $S_n$ acts on any such $\pi$ by permuting the factors, and we denote by $S_\pi$ the stabilizer of $\pi$ in $S_n$. Denote by $\omega$ the intertwining representation of $S_\pi$; that is,
\[
\omega: S_\pi \rightarrow GL(V_1 \otimes \cdots \otimes V_n),\ \  \omega(\sigma)(v_1 \otimes \cdots \otimes v_n) = v_{\sigma^{-1}(1)} \otimes \cdots \otimes v_{\sigma^{-1}(n)} 
\]
where $V_i$ is the vector space of the representation $\pi_{\ell_i}$. Then for any irreducible representation $\rho$ of $S_\pi$, the induced representation $R_{\pi,\rho} := ind_{\Gamma^n \rtimes S_\pi}^{G_n}((\pi \circ \omega)\hat{\otimes} \rho)$ is an irreducible $G_n$-representation, and all irreducible representations of $G_n$ are of this form \cite[Sec. 3.2]{BR}. Throughout this paper, for a representation $\pi$ of a group $G$, we denote by $\chi_\pi$ its character. 

Benson--Ratcliff provide a useful method for determining the multiplicity of $R_{\pi,\rho}$ in $L(G_n/K_n)$. In particular, they show that the dimension of the space of $K_n$-fixed vectors in $R_{\pi,\rho}$ (which is equal to the multiplicity of $R_{\pi,\rho}$ in $L(G_n/K_n)$) is equal to the dimension of the space of $K_\pi$-fixed vectors in $(\pi \circ \omega)\hat{\otimes}\rho$, where $K_\pi := \Delta_n \rtimes S_\pi$ \cite[Lem. 3.3]{BR}. This can be calculated by taking the inner product of the character of $(\pi \circ \omega)\hat{\otimes}\rho$, with the trivial character on $K_\pi$:
\begin{equation}
\label{trivial char}
\frac{1}{|\Delta_n \times S_\pi|}\sum_{(\delta,\sigma) \in K_\pi}\chi_{\pi \circ \omega}(\delta,\sigma)\chi_\rho(\sigma) = \frac{1}{|S_\pi|}\sum_{\sigma \in S_\pi} \bigg( \frac{1}{|\Delta_n|}\sum_{\delta \in \Delta_n} \chi_{\pi \circ \omega}(\delta,\sigma) \bigg) \chi_\rho(\sigma). 
\end{equation}
The inner sum on the right hand side of (\ref{trivial char}) is a class function on $S_\pi$. This class function plays an important role in our story so we give it a name:
\begin{equation}
\label{M pi}
M_\pi(\sigma) := \frac{1}{|\Delta_n|}\sum_{\delta\in \Delta_n} \chi_{\pi \circ \omega}(\delta,\sigma).
\end{equation}
Equation (\ref{trivial char}) determines the coefficient of $\chi_\rho$ in the decomposition of $M_\pi$ into irreducible characters of $S_\pi$. Therefore, we see that $(G_n,K_n)$ is a Gelfand pair if and only if for each choice of $\pi$, the coefficient of $\chi_\rho$ in $M_\pi$ is less than or equal to 1 for all irreducible representations $\rho$ of $S_\pi$.  

Now we wish to highlight a key observation concerning $M_\pi(e)$, the value of $M_\pi$ on the identity $e \in S_\pi$. First, note that for $\delta \in \Delta_n$,
\[
 \chi_{\pi \circ \omega}(\delta,e) = \prod_{i = 1}^{n}\chi_{\pi_{\ell_i}}(\delta).
\]
Thus, substituting this into (\ref{M pi}), we have   
\begin{equation}
\label{M pi identity}
M_\pi(e) = \frac{1}{|\Delta_n|} \sum_{\delta\in \Delta_n} \big(\prod_{i = 1}^{n}\chi_{\pi_{\ell_i}}(\delta)\big) = \frac{1}{|\Gamma|}\sum_{C} \big(\prod_{i = 1}^{n}\chi_{\pi_{\ell_i}}(C)\big)|C|,  
\end{equation}
where $C$ runs over the conjugacy classes of $\Gamma$. The second equality follows from the fact that $\Delta_n \simeq \Gamma$ by the obvious isomorphism. Now, the right hand side of (\ref{M pi identity}) is also equal to the inner product of the $\Gamma$-representation $\pi_{\ell_1} \otimes \cdots \otimes \pi_{\ell_{n-1}}$ with $\pi_{\ell_n}$ whenever $\chi_{\pi_{\ell_n}}$ is real-valued. Hence we have proven the following result.

\begin{lemma}
\label{Me Prop}
Let $\Gamma$ be a finite group, and let $\{\pi_\ell\}_{\ell \in S}$ be the irreducible representations of $\Gamma$. Then for $\pi = \pi_{\ell_1} \hat{\otimes}\cdots \hat{\otimes}  \pi_{\ell_n}$, $M_\pi(e)$ is equal to the multiplicity of $\pi_{\ell_n}$ in $\pi_{\ell_1} \otimes\cdots \otimes  \pi_{\ell_{n-1}}$, if $\chi_{\pi_{\ell_n}}$ is real-valued.
\end{lemma}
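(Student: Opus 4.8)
The statement is essentially a bookkeeping exercise, and the computation already carried out in (\ref{M pi})--(\ref{M pi identity}) does most of the work; the plan is simply to organize those steps and then match the result against the character-theoretic formula for a multiplicity. First I would evaluate $\chi_{\pi\circ\omega}$ on the elements $(\delta,e)$ appearing in (\ref{M pi}). Since $\omega(e)$ is the identity operator, the restriction of $\pi\circ\omega$ to $\Gamma^n$ is just the exterior tensor product $\pi = \pi_{\ell_1}\hat\otimes\cdots\hat\otimes\pi_{\ell_n}$, whose character factors on a diagonal element $\delta = (\gamma,\dots,\gamma)$ as $\chi_{\pi\circ\omega}(\delta,e) = \prod_{i=1}^n \chi_{\pi_{\ell_i}}(\gamma)$; this is the identity recorded just before (\ref{M pi identity}).

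Next I would substitute this into the definition (\ref{M pi}) of $M_\pi$ and use the isomorphism $\Delta_n \simeq \Gamma$, $\delta \leftrightarrow \gamma$, so that averaging over $\Delta_n$ becomes averaging over $\Gamma$; grouping the group elements by conjugacy class then yields exactly the right-hand side of (\ref{M pi identity}). On the other side, orthogonality of characters gives that the multiplicity of $\pi_{\ell_n}$ in $\pi_{\ell_1}\otimes\cdots\otimes\pi_{\ell_{n-1}}$ is
\[
\big\langle \chi_{\pi_{\ell_1}}\cdots\chi_{\pi_{\ell_{n-1}}},\ \chi_{\pi_{\ell_n}}\big\rangle \;=\; \frac{1}{|\Gamma|}\sum_{\gamma\in\Gamma}\Big(\prod_{i=1}^{n-1}\chi_{\pi_{\ell_i}}(\gamma)\Big)\overline{\chi_{\pi_{\ell_n}}(\gamma)},
\]
using that the character of a tensor product is the product of characters. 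When $\chi_{\pi_{\ell_n}}$ is real-valued the complex conjugate may be dropped, turning the right-hand side into $\frac{1}{|\Gamma|}\sum_{\gamma\in\Gamma}\prod_{i=1}^{n}\chi_{\pi_{\ell_i}}(\gamma)$, which after grouping by conjugacy classes is precisely the expression for $M_\pi(e)$ obtained in the previous step. Comparing the two computations finishes the proof.

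I do not expect a genuine obstacle here: the content of the lemma is the observation that this particular value of a class function carries a representation-theoretic meaning, not any difficulty in the argument itself. The one place that requires care is the real-valuedness hypothesis in the final comparison. Without it, the same computation shows that $M_\pi(e)$ equals instead the multiplicity of the contragredient $\pi_{\ell_n}^{\ast}$ in $\pi_{\ell_1}\otimes\cdots\otimes\pi_{\ell_{n-1}}$ --- equivalently, the multiplicity of the trivial representation in the full tensor product $\pi_{\ell_1}\otimes\cdots\otimes\pi_{\ell_n}$ --- so the clean statement genuinely needs $\pi_{\ell_n}$ to be self-dual.
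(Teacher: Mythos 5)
Your proof is correct and follows the same route as the paper: evaluate $\chi_{\pi\circ\omega}(\delta,e)$ as a product of characters, identify $\Delta_n$ with $\Gamma$ to pass to a sum over conjugacy classes, and recognize the result as the multiplicity inner product once real-valuedness lets you drop the complex conjugate. Your closing remark that without self-duality one obtains the multiplicity of the contragredient (equivalently, of the trivial representation in the full tensor product) is a correct and worthwhile observation, but the argument itself matches the paper's.
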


\begin{rmk}
Note that the product in (\ref{M pi identity}) is not changed by reordering the $\chi_{\pi_{\ell_i}}$. Thus, more generally, we have shown that $M_\pi(e)$ is equal to the multiplicity of $\pi_{\ell_i}$ in $\pi_{\ell_1} \otimes \cdots \otimes \pi_{\ell_{i-1}} \otimes \pi_{\ell_{i + 1}} \otimes \cdots \otimes \pi_{\ell_n}$ whenever $\chi_{\pi_{\ell_i}}$ is real-valued. For our purposes, we will only consider the case when $i = n$, as in Lemma \ref{Me Prop}. 
\end{rmk}

 With this, we can simplify the calculations used in computing cracking points. In particular, we can use Lemma \ref{Me Prop} to make statements about $M_\pi$ based solely on the dimensions of $\pi_{\ell_n}$ and $\pi_{\ell_1} \otimes\cdots \otimes  \pi_{\ell_{n-1}}$, which allows us to circumvent the necessity for complete character tables in some cases. An example of such utility is given in the proof of Theorem \ref{thm} below. 

\section{Cracking Points of $S_k$}

\noindent In this section we use Lemma \ref{Me Prop} to compute the cracking points of the symmetric groups. We start with the following observation.

\begin{lemma}
\label{SummerLemma}
Let $S_\pi$ be the stabilizer of $\pi = \pi_{\ell_1} \hat{\otimes} \cdots \hat{\otimes} \pi_{\ell_n}$ in $S_n$. If $M_\pi(e) > \sum_{\rho \in \widehat{S}_\pi} \dim{\rho}$, then $(G_n,K_n)$ is not a Gelfand pair. 
\end{lemma}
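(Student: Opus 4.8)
The plan is to show that if $M_\pi(e)$ exceeds $\sum_{\rho \in \widehat{S}_\pi} \dim \rho$, then some irreducible representation $\rho$ of $S_\pi$ must appear in $M_\pi$ with multiplicity at least $2$, which by the discussion following equation (\ref{trivial char}) forces $(G_n, K_n)$ to fail to be a Gelfand pair. The starting point is that $M_\pi$, being a class function on $S_\pi$ (indeed a genuine character, since it records the dimensions of spaces of $\Delta_n$-fixed vectors), decomposes as $M_\pi = \sum_{\rho \in \widehat{S}_\pi} c_\rho \chi_\rho$ where each $c_\rho \in \Z_{\geq 0}$ is the coefficient computed by (\ref{trivial char}). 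Evaluating at the identity gives $M_\pi(e) = \sum_{\rho \in \widehat{S}_\pi} c_\rho \dim \rho$.

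The key step is a simple counting argument: if every $c_\rho \leq 1$, then $M_\pi(e) = \sum_\rho c_\rho \dim \rho \leq \sum_\rho \dim \rho = \sum_{\rho \in \widehat{S}_\pi} \dim \rho$. Taking the contrapositive, the hypothesis $M_\pi(e) > \sum_{\rho \in \widehat{S}_\pi} \dim \rho$ implies that $c_\rho \geq 2$ for at least one $\rho$. By the criterion recalled above — namely that $(G_n, K_n)$ is a Gelfand pair if and only if for every choice of $\pi$, all coefficients $c_\rho$ in $M_\pi$ are at most $1$ — the existence of such a $\rho$ shows $(G_n,K_n)$ is not a Gelfand pair.

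I do not anticipate a genuine obstacle here; the lemma is essentially a repackaging of the Gelfand pair criterion in a form that avoids needing the full character table of $S_\pi$. The only point requiring a word of care is the observation that $M_\pi$ is an honest character (so that its coefficients $c_\rho$ are nonnegative integers and the inequality $\sum_\rho c_\rho \dim\rho \leq \sum_\rho \dim\rho$ genuinely follows from $c_\rho \le 1$); this is immediate from the interpretation of $M_\pi(\sigma)$ via (\ref{M pi}) as coming from the character $\chi_{\pi\circ\omega}$ averaged over $\Delta_n$, or alternatively from the fact that (\ref{trivial char}) exhibits each $c_\rho$ as the dimension of a space of fixed vectors. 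With that in hand the proof is a two-line estimate, and its value lies in combining it with Lemma \ref{Me Prop}, which lets one bound $M_\pi(e)$ below by a product of dimensions without computing characters.
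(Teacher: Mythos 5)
Your proposal is correct and matches the paper's proof essentially line for line: decompose $M_\pi$ into irreducible characters of $S_\pi$, note the coefficients are nonnegative integers because they are dimensions of spaces of $K_\pi$-fixed vectors (the paper's stated justification, which you also give as an alternative), evaluate at the identity, and take the contrapositive of the counting bound. Your additional remark that $M_\pi$ is itself an honest character of $S_\pi$ (the character of its action on $\Delta_n$-invariants) is a slightly more direct way to see the integrality, but the argument is the same.
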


\begin{proof} Because $M_{\pi}$ is a class function on $S_{\pi}$, it can be expressed uniquely as a linear combination of irreducible characters $\chi_{\rho}$ of $S_{\pi}$: 
\[
M_{\pi}= \sum_{\rho \in \widehat{S}_\pi} a_\rho \chi_{\rho}
\]
\noindent where $\{a_\rho\}_{\rho \in \widehat{S}_\pi}$ are complex coefficients. Now by (\ref{trivial char}), $a_\rho= \langle M_{\pi}, \chi_{\rho} \rangle$ counts the dimension of the space of $K_{\pi}$-fixed vectors in the $\Gamma^n \rtimes S_\pi$-representation $(\pi \circ \omega) \hat{\otimes} \rho$. Thus, we see that $a_\rho \in \Z^{\geq 0}$ for all $\rho \in \widehat{S}_\pi$. Therefore, if 
\[
M_{\pi}(e) = \sum_{\rho \in \widehat{S}_\pi}a_\rho \dim{\rho} > \sum_{\rho \in \widehat{S}_\pi} \dim{\rho}
\]
there must be some $\rho \in \widehat{S}_\pi$ such that $a_\rho > 1$. Hence, $(G_n, K_n)$ is not a Gelfand pair. 
\end{proof}

\subsection*{Proof of Theorem 1}

 Fix $k\geq 5$, and let $\pi_m$ be the highest dimensional irreducible representation of $S_k$. We claim that there is an irreducible representation $\psi$ of $S_k$ such that for $\pi = \pi_m \hat{\otimes} \pi_m \hat{\otimes} \psi$, there is some irreducible character $\chi_\rho$ of $S_\pi$ which has a coefficient greater than 1 in the decomposition of $M_\pi$. By \cite[Lem. 3.3]{BR}, this implies that the irreducible $G_3$-representation $R_{\pi, \rho}$ has multiplicity greater than $1$ in $L(G_3/K_3)$, and hence $(G_3,K_3)$ is not a Gelfand pair.

 To show that such a representation $\psi$ exists, there are two cases to consider. The first case is when $\psi = \pi_m$ and $S_\pi = S_3$. The other case is when $\psi \neq \pi_m$, in which case $S_\pi = S_2 \times S_1 \simeq S_2$. We will prove that $(G_3,K_3)$ is not a Gelfand pair by showing that $M_\pi(e) > 4$ in the first case, $M_\pi(e) > 2$ in the second case, and then applying Lemma 3.1. By Lemma \ref{Me Prop}, this is equivalent to showing that the coefficient of $\pi_m$ in $\pi_m \otimes \pi_m$ is greater than 4, or that the coefficient of $\pi_i$ in $\pi_m \otimes \pi_m$ is greater than 2 for some $\pi_i \in \widehat{S}_k$ different than $\pi_m$. To do this, we will show that the following inequality holds for all $k \geq 5$:
\[
(\dim{\pi_{m}})^2 > 4\dim{\pi_{m}} + \sum_{\pi_i \in \widehat{S}_k,\ \pi_i \neq \pi_m} 2\dim{\pi_{i}}.
\]
As $\pi_m$ is of maximal dimension in $\widehat{S}_k$, it is enough to show that 
\[
(\dim{\pi_{m}})^2 \geq 4\dim{\pi_{m}} +  2(p(k) - 1)\dim{\pi_{m}}
\]
where $p(k)$ is the number of partitions of $k$, which is equal to the number of irreducible representations of $S_k$. Simplifying, this amounts to establishing the inequality
\begin{equation}
\label{ineq}
\dim{\pi_m} \geq  2p(k) + 2.
\end{equation}

An asymptotic lower bound is given for $\dim{\pi_m}$ in \cite[Thm.1]{VK}, namely 
\[
\dim{\pi_m} \geq e^{-c\sqrt{k}}\sqrt{k!}
\]
where $c = \frac{\pi}{\sqrt{6}}$. Similarly, an asymptotic upper bound was found for $p(k)$ in \cite{HR}:
\[
p(k) \leq \frac{1}{4k\sqrt{3}} e^{\pi\sqrt{\frac{2k}{3}}}.
\]
Combining these results with (\ref{ineq}), we see that $(G_3,K_3)$ fails to be a Gelfand pair if
\[
e^{-c\sqrt{k}}\sqrt{k!} \geq \frac{1}{2k\sqrt{3}} e^{\pi\sqrt{\frac{2k}{3}}} + 2.
\]
This is equivalent to the condition that the ratio
\[
r(k) := \frac{2k\sqrt{3}e^{-c\sqrt{k}}\sqrt{k!} - 4k\sqrt{3}}{e^{\pi\sqrt{\frac{2k}{3}}}}
\]
is greater than or equal to 1. A direct calculation shows that this holds for $k = 12$. For $k > 12$, note that, after replacing $k!$ with the Gamma function $\Gamma(k+1)$ restricted to the positive real axis, the derivative $\frac{d}{dk}r(k)$ is positive and hence $r(k)$ is increasing.  Thus, $r(k) \geq 1$ for $k \geq 12$ and $N(S_k) = 3$ in that case. The rest we calculate through a case-by-case analysis.


 The cracking points of $S_k$ for $k = 4,5,6,$ and $7$ can be computed directly using their character tables. Here we show $N(S_5) = 3$ as an example. To do this, we will calculate $M_\pi$ directly for a specific choice of an irreducible representation $\pi$ of $S_5 \times S_5 \times S_5$. Consider the following partial character table of $S_5$, which contains the characters of the highest dimensional and second highest dimensional irreducible representations:
\[
\begin{matrix}
 &(1) & (10) & (15) & (20) & (20) & (24) & (30) \\
 & I & 2 & 2,2 & 3 & 3,2 & 5 & 4 \\
 \pi_1 & 5 & 1 & 1 & -1 & 1 & 0 & 1 \\
 \pi_2 & 6 & 0 & -2 & 0 & 0 & 1 & 0
\end{matrix}
\]

Now let $\pi = \pi_2 \hat{\otimes} \pi_2 \hat{\otimes} \pi_1$, which has a stabilizer of $S_\pi \simeq S_2$ in $S_3$. Then calculating $M_\pi(\sigma)$ directly from (\ref{M pi identity}), we see that $M_\pi(\sigma) = 2$ for both $\sigma$ in $\widehat{S}_\pi$. Hence, $M_\pi = 2\chi_{triv}$, and $(G_3,K_3)$ is not a Gelfand pair for $\Gamma = S_5$. Similarly, for $k = 6$ and $k = 7$ we take $\pi$ to be the representation consisting of two copies of the highest dimensional irreducible representation of $S_k$ and one copy of the second highest. Again from (\ref{M pi identity}), we calculate that $M_\pi(e) = 4$ in the case of $S_6$, and $M_\pi(e) = 5$ for $S_7$. Thus, by Lemma \ref{SummerLemma}, $(G_3,K_3)$ is not a Gelfand pair in either case. Finally, for $\Gamma = S_4$, taking $\pi$ to be four copies of the standard representation of $S_4$ suffices to show that $(G_4,K_4)$ is not a Gelfand pair. Calculating the decomposition of $M_\pi$ into irreducible $S_\pi$-representations for each choice of $\pi$ when $n = 3$, one finds no cases of multiplicity, and hence $N(S_4) = 4$.   

For $S_8$ through $S_{11}$, we show directly that (\ref{ineq}) holds. The computations are contained in the table below. The values for $\dim{\pi_{m}}$ are given in \cite[Ch. 7B]{D}. 

\begin{center}
 \begin{tabular}{||c c c||} 
 \hline
 $k$ & $\dim{\pi_{m}}$ & $2p(k) + 2$ \\ [0.5ex] 
 \hline\hline
 8 & 90 & 46 \\
 9 & 216 & 62 \\
 10 & 768 & 86 \\
 11 & 2310 & 114 \\
 \hline
\end{tabular}
\end{center}

\qed

\bibliographystyle{alpha}
\bibliography{bibliography}

\end{document}